\documentclass[12pt]{article}
\usepackage[utf8]{inputenc}
\usepackage{amssymb,amsmath,amsfonts,amsthm,amscd,latexsym,indentfirst,verbatim,xcolor}
\usepackage[T1]{fontenc}
\usepackage{geometry}
\usepackage{tikz}
\usetikzlibrary{arrows.meta,decorations.pathreplacing,calc}

\def\Mat{M}
\def\End{\operatorname{End}}
\def\Span{\textrm{Span}}
\def\supp{\operatorname{supp}}
\def\wt{\operatorname{wt}}
\def\rk{\operatorname{rk}}
\def\Th{\widehat T}
\def\Gh{\widehat\Gamma}
\def\tx{\mathtt{x}}
\def\ty{\mathtt{y}}
\def\ta{\mathtt{a}}

\theoremstyle{plain}
\newtheorem{theorem}{Theorem}[section]
\newtheorem{lemma}[theorem]{Lemma}
\newtheorem{proposition}[theorem]{Proposition}

\theoremstyle{definition}
\newtheorem{definition}[theorem]{Definition}

\begin{document}
\sloppy
\hfill{16N40, 16D30 (MSC2020)}
\begin{center}
{\Large A simple construction of a simple nil ring}
\smallskip

Vsevolod Gubarev
\end{center}

\begin{abstract}
In 1999, A. Smoktunowicz solved a long-standing problem and constructed a~simple nil ring.
We present a substantial simplification of this construction
found by Aristotle (Harmonic)~\cite{Aristotle} with the assistance of ChatGPT.
\end{abstract}

\section{Introduction}

At least in 1956, I. Kaplansky posed a problem of
existence of a simple nil ring as Problem~2 in his list of the twelve problems in ring theory~\cite{Kaplansky}.
In 1969, this problem was recorded as 1.172 in Dniester Notebook~\cite{Dnestrovskaya} by K. A. Zhevlakov.
In 1969, Yu.~M.~Ryabukhin found a useful criterion~\cite{Ryab}.
Finally, in 1999 (published in 2002~\cite{Smok}, see also the exposition~\cite{Lvov}) A.~Smoktunowicz solved this problem proving

\begin{theorem}\label{main}
Over every countable field $K$ there exists a simple associative nil algebra.
\end{theorem}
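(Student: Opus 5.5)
Over a countable field $K$ the free algebra $A=K\langle \tx,\ty\rangle$ without unit is countable, so its elements can be listed as $f_1,f_2,\dots$ and, for the simplicity requirement, all pairs $(f_i,f_j)$ with $f_j\neq 0$ can be listed as well. The plan is to build a graded ideal $I\subset A$, generated by homogeneous elements of suitably sparse degrees, with the following three properties.
\begin{enumerate}
\item[(a)] Every $f_i$ is nilpotent modulo $I$; that is, $f_i^{m_i}\in I$ for some $m_i$.
\item[(b)] $I$ is proper; in fact $\tx\notin I$, and $A/I$ is infinite dimensional, so the quotient is not nilpotent.
\item[(c)] Simplicity is arranged as well: for each pair $(f,g)$ with $g\notin I$, the element $f$ lies in the ideal generated by $g$ modulo $I$.
\end{enumerate}
Instead of forcing (c) directly, I would use Ryabukhin's criterion~\cite{Ryab}. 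If $R$ is a nil algebra that is not nilpotent, then a maximal ideal $M$ not containing a suitable element exists by Zorn's lemma, and $R/M$ is then a simple nil algebra, provided $R^2\not\subseteq M$. So the task reduces to constructing a nil algebra $R=A/I$ with a "uniform" element that survives every quotient by an ideal maximal with respect to avoiding it. Concretely, I would aim for the following. There is an element $\tx$ and a fixed family of words $w$ such that for every $0\neq g\in R$ there are $a,b\in R^1$ with $agb=\tx$ modulo the ideal. Equivalently, every nonzero ideal of $R$ contains $\tx$. Then the ideal $M$, maximal among ideals not containing $\tx$, gives a simple quotient, because any strictly bigger ideal contains $\tx$ and hence, by the choice of $\tx$ generating $R$ as an ideal, contains everything.

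The construction of $I$ proceeds inductively, following Smoktunowicz's scheme but simplified. At stage $n$, I take $f_n$ and choose a very large degree $d_n$. Each homogeneous component of $f_n^{m}$ of degree in a window $[d_n,2d_n]$ is placed into a subspace $U_n$ of words of length $d_n$. The subspace $U_n$ is chosen with small codimension control: its dimension is at most $\approx (\deg f_n)\cdot m$ times a polynomial factor, which is tiny compared to $2^{d_n}$. Then $I$ is defined as the set of elements all of whose homogeneous parts of degree $k$ lie in $\sum_{n} A^{(k-d_n)}U_n A$, with sufficiently separated $d_n$. The growth bound $\dim U_n\ll 2^{d_n}$, together with separation $d_{n+1}\gg d_n$, ensures that $I$ misses a positive proportion of monomials in each degree. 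This gives (b), and moreover it guarantees that certain test monomials built from $g$ avoid $I$, which yields (c).

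The main obstacle is the counting/linear-algebra lemma. It must show that $f^m$ can be absorbed into a "small" ideal while the ideal generated by an arbitrary nonzero $g$ still reaches a fixed word modulo the union of all later small ideals. I would try first to prove it by choosing, in each degree, a complementary subspace of words avoiding the accumulated $U_n$. The idea is to use dimension counting over $K$: the number of words of length $k$ is $2^k$, while $\dim(I\cap A_k)\le \sum_n k\,\dim U_n\,2^{k-d_n}$, which is less than $2^{k-1}$ by sparseness. Beyond this, one needs a pigeonhole argument to produce a multiplier pair $(a,b)$ with $agb\equiv \tx^{N}$ up to lower terms.
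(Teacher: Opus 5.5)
There is a genuine gap, and it lies exactly in the step that is supposed to produce simplicity. The version of Ryabukhin's criterion you use is false. Suppose $M$ is maximal among ideals not containing $b$. Then $S=R/M$ has a smallest nonzero ideal $J=S^\#\bar bS^\#$, but $J$ can be simple only if $J=J^2$. That is precisely what the hypothesis $b\in Rb^2R$ of Proposition~\ref{ryab} guarantees, and nilness, non-nilpotence and the proviso $R^2\not\subseteq M$ do not replace it.

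Your own $R=A/I$ gives a counterexample, assuming $I$ has no elements of degree $\le 2$. Put $M=R^3+\Span\{\tx-\ty^2,\tx^2,\tx\ty,\ty\tx\}$. This is an ideal, it is maximal among ideals not containing $\tx$, and $R^2\not\subseteq M$. Yet $R/M\cong \ty K[\ty]/(\ty^3)$ is nilpotent, so it is not simple. Likewise, $\tx$ does not generate $R$ as an ideal: $\ty\notin R^\#\tx R^\#$ for degree reasons.

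More fundamentally, no graded construction can supply the missing condition. If $I$ is homogeneous, then $R=A/I$ is positively graded. Let $k$ be the degree of the lowest nonzero component of $b\neq 0$. Every element of $R^\#b^2R^\#$ has components only in degrees $\ge 2k>k$, so $b\notin Rb^2R$. For the same reason $R^2$ is a proper ideal, nonzero by your (b), so $R$ is never simple. Your property (c) already fails for $g=\tx^2$, $f=\tx$, and an identity $ugv=\tx$ with $u,v\in R^\#$ and $g$ homogeneous of degree $\ge 2$ is impossible.

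One must impose a non-homogeneous relation, as the paper does with $\omega=a-\sum_j z_ja^2z_j$, which gives $b=\sum_j c_jb^2c_j$ for free. The whole difficulty then moves to showing that $b\neq 0$ in a nil quotient of $A/(\omega)$. Counting monomials degree by degree says nothing about this, because $(\omega)$ is not graded. The paper handles it with finite-dimensional tree representations where $\rk T(a)\ge D>4^N$ but $\rk T(\omega)=2^N$ (Lemma~\ref{defect}). Lemma~\ref{oneElement}(a) then gives $a\notin(\Omega)$, while compression and amplification make the $f_i$ nilpotent without destroying this rank gap. Your proposal has no counterpart to this step.
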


The main purpose of this paper is to give a short proof of Theorem~\ref{main}.
As in~\cite{Smok}, the starting point is Ryabukhin's criterion~\cite{Ryab}
(Proposition~\ref{ryab}): it suffices to construct a nil algebra $R$ containing an element $b\ne 0$ with $b\in Rb^2R$.
We take
$$
A = K\langle a,z_1,\ldots,z_5\rangle,\quad
 \omega = a-\sum_{j = 1}^5 z_ja^2z_j,
$$
and $R$ is a homomorphic image of $A$ in which $\omega$ vanishes and every element is nilpotent.

We construct finite-dimensional representations $T_i$ of $A$ in which the rank of
$T_i(a)$ is much larger than the rank of $T_i(\omega)$, while more and more elements
of $A$ are mapped to nilpotent matrices.
Section~\ref{s:matrices} shows that such a~sequence already yields the required algebra: we take the direct product
$E = \prod_i\Mat_{m_i}(K)$, factor out the ideal generated by $\Omega = (T_i(\omega))_i$, and let $R$ be the subalgebra generated by the images of the
six letters (Proposition~\ref{fromMatrices}).
Sequences with finite support vanish
in $E/(\Omega)$, which makes $R$ nil, while the rank condition keeps the image of
$a$ nonzero.

It remains to construct representations satisfying the requirements of Proposition~\ref{fromMatrices}.
We do this in three steps.
First, the representations are built from a complete binary tree (Section~\ref{s:tree}).
The letter~$a$ jumps over vertices, while the letters $z_i$ move along the tree with matrix weights.
The relation defining a chain makes the terms $z_i a^2z_i$ cancel all internal $a$-jumps, so that $T(\omega)$ survives only on the leaves.
This produces the required rank gap.
Second, for a~fixed polynomial~$f$ we cut the tree into bands between consecutive grid heights (Sections~\ref{s:passes} and~\ref{s:first}).
Every sufficiently long path has a~first crossing of one such band.
This reduces nilpotency of $T(f)$ to the vanishing of finitely many boundary blocks of a cut tree, and Theorem~\ref{nilpotence} makes this reduction precise.
Third, compression makes these boundary blocks vanish, while amplification increases the root dimension and preserves all conditions imposed earlier (Section~\ref{s:operations}).
Repeating the two operations lets us treat the elements of~$A$ one at a~time without losing the rank inequality (Section~\ref{s:construction}).

All algebras are associative and need not have a unit.
Given an algebra $S$, we denote by $S^\#  = K\oplus S$ the algebra with an adjoined unit.

\section{Preliminaries}\label{s:ryab}

For completeness, we provide the proof of Ryabukhin's criterion for the required direction.

\begin{proposition}[{\cite{Ryab}}]\label{ryab}
Let $R$ be a nil algebra over a~field~$K$ and let $0\ne b\in Rb^2R$.
Then some quotient algebra of~$R$ contains an ideal which is simple and nil.
\end{proposition}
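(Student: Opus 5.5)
Since $b\in Rb^2R$, I will first fix one representation
$$
b=\sum_{k=1}^n u_k b^2 v_k,\qquad u_k,v_k\in R,
$$
and then use Zorn's lemma to choose an ideal $M$ of $R$ that is maximal among ideals not containing $b$. This family is nonempty because it contains $0$. Any union of a chain of such ideals is again an ideal not containing $b$, so a maximal element exists. Put $\bar R=R/M$. Then $\bar R$ is nil, and $\bar b\neq 0$. By maximality of $M$, every nonzero ideal of $\bar R$ contains $\bar b$.

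Next, let $I$ be the ideal of $\bar R$ generated by $\bar b$. Since $I$ is nil, it remains to show that $I$ is simple. First, $I^2\neq0$, because $\bar b=\sum \bar u_k\bar b\,\bar b\,\bar v_k\in \bar R\bar b^2\bar R$, and $\bar b^2\in I^2$ forces $I^2\ne 0$.

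Now take a nonzero ideal $J$ of $I$. The main obstacle is that $J$ need not be an ideal of $\bar R$, so the maximality of $M$ cannot be applied to $J$ directly. I plan to get around this with the standard Andrunakievich lemma. The ideal $\hat J=\bar R^\# J\bar R^\#$ of $\bar R$ generated by $J$ satisfies $\hat J^3\subseteq J$. Indeed, $\hat J\subseteq I$, and
$$
\hat J^3\subseteq \bar R^\# J \bar R^\#\cdot I\cdot \bar R^\# J\bar R^\#\subseteq \bar R^\# (J I J)\bar R^\#,
$$
and more carefully $I\bar R^\# J\bar R^\# I\subseteq IJI\subseteq J$, because $I\bar R^\#\subseteq I$ and $J$ is an ideal of $I$.

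The rest of the argument uses this inclusion as follows. Since $\hat J$ is a nonzero ideal of $\bar R$, it contains $\bar b$. Then I use the self-reproducing relation twice:
$$
\bar b=\sum_k \bar u_k\bar b\,\bar b\,\bar v_k=\sum_{k,l}\bar u_k\bar b\,\bar u_l\bar b\,\bar b\,\bar v_l\,\bar v_k .
$$
This writes $\bar b$ as a sum of terms $x\,\bar b\,y\,\bar b\,\bar b\,w$ with $x,y,w\in\bar R$. After grouping suitably, each such term has the form $(x\bar b)(y\bar b)(\bar b w)$, and each of $x\bar b$, $y\bar b$, $\bar b w$ lies in $\hat J\cap I$. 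Using $I\hat J I\subseteq J$, the middle factor sandwiched between elements of $I$ lands in $J$. Hence $\bar b\in J$, so $I\subseteq J$ because $J$ is then also closed under multiplication by $I$ on both sides. More precisely, one more application of the relation puts $\bar b$ in $I\bar bI\subseteq J$, and the ideal of $\bar R$ generated by $\bar b$ equals $\bar R^\#\bar b\bar R^\#$. Each element $r\bar b s$ of this ideal can be rewritten, again via $\bar b\in \bar R\bar b\bar R\bar b\bar R$-type expansions, as an element of $I\bar b I\subseteq IJI\subseteq J$. Therefore $J=I$, and $I$ is a simple nil ideal of the quotient $\bar R$.
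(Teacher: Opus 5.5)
Your proof is correct and is essentially the paper's proof: Zorn's lemma, the smallest nonzero ideal $I=\bar R^\#\bar b\bar R^\#$ of $\bar R$, and the sandwich inclusion $I\hat J I\subseteq J$, which is the paper's $J(S^\#HS^\#)J\subseteq JHJ\subseteq H$. Your double expansion of $\bar b$ is an element-wise version of the paper's shortcut: $\bar b\in\hat J$ forces $\hat J=I$, and $\bar b\in I^2$ forces $I=I^3$, so $I=\hat J^3\subseteq J$ follows in one line. In particular, your intermediate step $\bar b\in J$ is not needed, since already $I\subseteq I\bar bI\subseteq I\hat JI\subseteq J$.
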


\begin{proof}
The union of a chain of ideals of the $K$-algebra~$R$ not containing~$b$ is again such an ideal.
By Zorn's lemma, there is an ideal~$Q$ of~$R$ which is maximal among the ideals not containing~$b$.
Put $S = R/Q$.
Every nonzero ideal of~$S$ contains $\bar b$, since its preimage in~$R$ strictly contains~$Q$.
Hence $J = S^\# \bar bS^\# $ is the smallest nonzero ideal of $S$.
From $b\in Rb^2R$ we get $\bar b\in J^2$, so $J = J^2\neq (0)$.

Let $(0)\neq H\subseteq J$ be a two-sided ideal of~$J$ (not necessarily a~$K$-subspace).
The set $S^\# HS^\# $ is a nonzero ideal of the algebra $S$ contained in
$J$, so $S^\# HS^\#  = J$.
Therefore
$$
J = J^3
  = J(S^\# HS^\# )J
  \subseteq JHJ
  \subseteq H.
$$
Thus $H = J$, and $J$ is a simple ring.
It is a $K$-algebra and it is nil, since $S$ is nil.
\end{proof}

\section{Useful representations}\label{s:matrices}

Given a~sequence of positive integers $(m_i)_{i\geq1}$, consider the direct product
$E = \prod_{i\ge 1}\Mat_{m_i}(K)$.

\begin{lemma}\label{oneElement}
Let $\Omega = (\Omega_i)\in E$ and $r_i = \rk\Omega_i$.
a) For every $G = (G_i)\in(\Omega)$ there is a~constant $C$ such that $\rk G_i\le C r_i$ for all $i$.

b) If $\Omega_i\neq0$ for every $i$, then $(\Omega)$ contains every sequence with finite support.
\end{lemma}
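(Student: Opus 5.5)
For part a), I will use the description of the two-sided ideal generated by $\Omega$ in the (non-unital) algebra $E$: every $G\in(\Omega)$ is a finite sum
$$
G = \sum_{k=1}^{N} X_k\,\Omega\, Y_k,\qquad X_k,Y_k\in E^\#,
$$
possibly together with a scalar multiple of $\Omega$, which is absorbed by allowing $X_k,Y_k$ to be the adjoined unit. Multiplication in $E$ is componentwise. Hence $G_i=\sum_k (X_k)_i\Omega_i(Y_k)_i$, where the components of an element of $E^\#$ are of the form $\lambda I+X_i$. By subadditivity of rank and the bound $\rk(PQR)\le\rk Q$, we get $\rk G_i\le N r_i$ for every $i$. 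So $C=N$ works, and it is independent of $i$ because the number of summands is fixed for a given $G$.

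For part b), it suffices to show that for each fixed $i$ and each $M\in\Mat_{m_i}(K)$, the sequence $e_i(M)$ is in $(\Omega)$. Here $e_i(M)$ denotes $M$ in slot $i$ and $0$ elsewhere. Finite sums of such sequences then give every finitely supported sequence.

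Since $\Omega_i\ne0$, pick $p,q$ with $(\Omega_i)_{pq}=c\ne0$. Then for matrix units we have
$$
E_{sp}\,\Omega_i\,E_{qt}=c\,E_{st}.
$$
Let $X=e_i(c^{-1}E_{sp})$ and $Y=e_i(E_{qt})$, both in $E$. Then $X\Omega Y=e_i(E_{st})\in(\Omega)$, because all other components vanish. Taking linear combinations over $s,t$ yields $e_i(M)$ for every $M$, so $(\Omega)\supseteq\bigoplus_i\Mat_{m_i}(K)$.

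Neither step is a real obstacle. The only point needing care is in a): the generated ideal in a non-unital algebra also contains terms like $\Omega Y$, $X\Omega$, and $\lambda\Omega$. Working with $E^\#$ on both sides handles all of these uniformly, and each such term still contributes at most $r_i$ to the rank in slot $i$.
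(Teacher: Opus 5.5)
Your proof is correct and follows the paper's argument. In (a) the paper writes $G=\sum_{s=1}^C U_s\Omega V_s$ and bounds the rank componentwise exactly as you do; in (b) it just notes that $\Omega_i\neq 0$ generates all of $\Mat_{m_i}(K)$, which your matrix-unit computation makes explicit, and your detour through $E^\#$ is harmless but unnecessary because $E=\prod_i\Mat_{m_i}(K)$ already has the unit $(I_{m_i})_i$.
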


\begin{proof}
(a) If $G = \sum_{s = 1}^C U_s\Omega V_s$, then
$\rk G_i\leq\sum_{s = 1}^C\rk(U_{s,i}\Omega_i V_{s,i})\leq Cr_i$.

(b) For every~$i$, the ideal generated by $\Omega_i$ equals $M_{m_i}(K)$,
hence~$(\Omega)$ contains all sequences with finite support.
\end{proof}

\begin{proposition}\label{fromMatrices}
Let $K$ be a countable field and let $f_1,f_2,\ldots$ be an enumeration of the
nonzero elements of $A$.
Suppose that there are representations
$T_i\colon A\to\Mat_{m_i}(K)$, integers $n_i\to\infty$ and positive integers $q_1,q_2,\ldots$ such that
\begin{equation}\label{hyp}
\rk T_k(a)>4^{n_k},\quad \rk T_k(\omega) = 2^{n_k}, \  k\geq1,\quad
 T_i(f_j)^{q_j} = 0, \ i\geq j\geq1.
\end{equation}
Then there is a nil algebra $R$ containing an element $b\neq0$ such that
$b = \sum_{j = 1}^5c_jb^2c_j$ for some $c_j\in R$.
In particular, $b\in Rb^2R$.
\end{proposition}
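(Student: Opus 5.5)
We follow the outline of Section~\ref{s:matrices}. Put $T = \prod_i T_i\colon A\to E$, so that $T(x) = (T_i(x))_i$, and put $\Omega = T(\omega) = (T_i(\omega))_i$. Let $\pi\colon E\to E/(\Omega)$ be the quotient map. Define $R$ to be the subalgebra of $E/(\Omega)$ generated by $\pi T(a)$ and $\pi T(z_1),\ldots,\pi T(z_5)$; equivalently, $R = \pi T(A)$. Take $b = \pi T(a)$ and $c_j = \pi T(z_j)$. Since $\pi T$ is a homomorphism and $\omega = a-\sum_j z_ja^2z_j$ maps to zero, we get $b = \sum_{j=1}^5 c_jb^2c_j$ immediately. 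Two things remain: $R$ is nil, and $b\neq 0$.

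\emph{Nil.} Every element of $R$ has the form $\pi T(f)$ with $f\in A$. We may assume $f\neq0$, so $f = f_j$ for some $j$. By \eqref{hyp}, $T_i(f_j)^{q_j} = 0$ for all $i\ge j$. Hence $T(f_j)^{q_j}$ is a sequence whose support lies in $\{1,\ldots,j-1\}$, which is finite. Since $\rk T_i(\omega) = 2^{n_i}\ge 1$, every $\Omega_i\neq 0$. Lemma~\ref{oneElement}(b) then says that $(\Omega)$ contains all finitely supported sequences. Therefore $\pi T(f_j)^{q_j} = 0$, and $R$ is nil.

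\emph{$b\ne0$.} This is the main point. Suppose instead that $T(a)\in(\Omega)$. By Lemma~\ref{oneElement}(a) there is a constant $C$ with $\rk T_k(a)\le C\,2^{n_k}$ for all $k$. On the other hand, \eqref{hyp} gives $\rk T_k(a)>4^{n_k}$. Combining the two, $4^{n_k}<C2^{n_k}$, that is, $2^{n_k}<C$ for every $k$. This contradicts $n_k\to\infty$. Hence $b\neq0$.

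Finally, $b = \sum_j c_jb^2c_j$ with $c_j\in R$, so $b\in Rb^2R$. Note that no unit is needed here, because each $c_j$ itself lies in $R$. The only delicate step is the rank comparison, and Lemma~\ref{oneElement}(a) handles it directly.
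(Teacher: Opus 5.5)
Your proposal is correct and follows essentially the same route as the paper. You show nilness via Lemma~\ref{oneElement}(b) applied to the finitely supported sequence $T(f_j)^{q_j}$. You show $b\neq0$ by contrasting the bound $\rk T_k(a)\le C\,2^{n_k}$ from Lemma~\ref{oneElement}(a) with $\rk T_k(a)>4^{n_k}$ and $n_k\to\infty$; this is just the contrapositive form of the paper's ratio argument.
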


\begin{proof}
Let $\Phi\colon A\to E$ be the homomorphism $\Phi(h) = (T_i(h))_i$ and put $\Omega = \Phi(\omega)$.
Let $\pi\colon E\to E/(\Omega)$ be the quotient map, and
let $R$~be $\pi \Phi(A)$, i.\,e. the subalgebra of $E/(\Omega)$ generated by
$b = \pi\Phi(a)$ and $c_j = \pi\Phi(z_j)$, $1\leq j\leq 5$.
Since $\pi(\Omega) = 0$, we have $b = \sum_jc_jb^2c_j$.

By~\eqref{hyp},
$\rk T_i(a)/\rk T_i(\omega)>2^{n_i}\to\infty$.
By~Lemma~\ref{oneElement}(a), $\Phi(a)\notin(\Omega)$, thus, $b\neq0$.

Every element of~$R$ has the form $\pi\Phi(f)$ with $f\in A$. 
If $f\neq0$, then $f = f_j$ for some~$j$.
By~\eqref{hyp}, the sequence $\Phi(f_j)^{q_j} = (T_i(f_j)^{q_j})_i$ vanishes for $i\ge j$, so it has
finite support.
All $\Omega_i$ are nonzero, since $\rk\Omega_i = 2^{n_i}>0$.
By~Lemma~\ref{oneElement}(b), $\Phi(f_j)^{q_j}\in(\Omega)$, hence $(\pi\Phi(f_j))^{q_j} = 0$.
\end{proof}

The rest of the paper constructs representations satisfying~\eqref{hyp}.

\section{The tree and its matrices}\label{s:tree}

\subsection{Chains}
\begin{definition}
A \emph{chain} $S$ of height $N\ge 1$ consists of positive integers
$d_0,\ldots,d_N$ with $d_N = 1$ and matrices
$$
X_{j,i}\in\Mat_{d_j\times d_{j+1}}(K),\quad
Y_{j,i}\in\Mat_{d_{j+1}\times d_j}(K),\quad 0\leq j<N,\ 1\leq i\leq 5
$$
such that $\sum_{i = 1}^5X_{j,i}Y_{j,i} = I_{d_j}$ for every $j$.
The number $D = d_0$ is
the \emph{root dimension} of $S$.
\end{definition}

The equality $\sum_iX_{j,i}Y_{j,i}=I_{d_j}$ has the following goal: in Lemma 4.3 it makes the terms $z_i a^2z_i$ cancel the $a$-jump over every internal vertex.

The \emph{initial chain} has height~1, dimensions $(d_0,d_1) = (5,1)$, and
$X_{0,i} = e_i$ (a column), $Y_{0,i} = e_i^{\mathsf T}$ (a row), where $e_1,\ldots,e_5$ form the standard basis of~$K^5$.
%Since each product $X_{j,i}Y_{j,i}$ has rank at most $d_{j+1}$, we always have $d_j\le 5d_{j+1}$, so $D\le 5^N$.
We will use $D>4^N$, for the initial chain we have $5>4$.

\subsection{The graph $\Gamma_N$}
We encode the complete binary tree of height $N$ by its \emph{contour word}
$$
 \ell_0 = \ta,\quad \ell_{h+1} = \tx\,\ell_h\,\ell_h\,\ty,\quad
 L_h := |\ell_h| = 3\cdot2^h-2 .
$$
The symbols $\tx$ (``down'') and $\ty$ (``up'') are auxiliary.
The positions between consecutive letters of $\ell_N$, including
both ends, are called \emph{gaps} and are numbered $0,1,\ldots,L_N$.
The
\emph{height} $\eta(p)$ of a gap $p$ is the number of letters $\tx$ minus the number
of letters $\ty$ to the left of $p$.
Every occurrence of a subword $\ell_h$ produced
by the recursion is a \emph{vertex} of level $N-h$.
It occupies an interval of gaps
$[p,q]$ with $\eta(p) = \eta(q)$ equal to its level.
The root is $[0,L_N]$; the
leaves are the letters $\ta$ and have level $N$.
An internal vertex $[p,q]$ has the
form $\tx\,\ell\,\ell\,\ty$, and its children are $[p+1,m]$ and $[m,q-1]$ for a~suitable~$m$.
Two vertices are either nested or have disjoint interiors, and every gap strictly
inside a vertex of level~$j$ has height greater than~$j$.

\begin{definition}
$\Gamma_N$ is the directed graph whose vertices are the gaps $0,\ldots,L_N$ and whose
labelled edges are:
\begin{itemize}
\item for each letter $\tx$ or $\ty$ of $\ell_N$, five parallel edges $p\to p+1$
across it, labelled $z_1,\ldots,z_5$;
\item for each vertex $[p,q]$ of the tree, an edge $p\to q$ labelled $a$, the
\emph{jump} over this vertex.
For a leaf this is the edge across its own letter $\ta$.
\end{itemize}
\end{definition}

\begin{figure}[ht]
\centering
\begin{tikzpicture}[x=11mm,y=-10mm,>={Stealth[length=2mm]},
  dot/.style={circle,fill,inner sep=1.3pt}]
\def\H{{0,1,2,2,2,1,2,2,2,1,0}}
\foreach \p in {0,...,10}{\pgfmathsetmacro\h{\H[\p]}
  \node[dot] (g\p) at (\p,\h) {}; \node[above=1pt,font=\scriptsize] at (\p,\h) {$\p$};}
\foreach \a/\b in {0/1,1/2,5/6}{\draw[->,thick] (g\a)--node[left,font=\tiny,pos=0.45]{$\tx$}(g\b);}
\foreach \a/\b in {4/5,8/9,9/10}{\draw[->,thick] (g\a)--node[right,font=\tiny,pos=0.55]{$\ty$}(g\b);}
\foreach \a/\b in {2/3,3/4,6/7,7/8}{\draw[->,dashed] (g\a)--(g\b);}
\draw[->] (g1) .. controls +(0.6,2.2) and +(-0.6,2.2) .. node[below,font=\scriptsize]{$a$} (g5);
\draw[->] (g5) .. controls +(0.6,2.2) and +(-0.6,2.2) .. node[below,font=\scriptsize]{$a$} (g9);
\draw[->] (g0) .. controls +(2.5,-1.3) and +(-2.5,-1.3) .. node[above,font=\scriptsize]{$a$} (g10);
\node[font=\scriptsize,right] at (10.4,0) {level 0};
\node[font=\scriptsize,right] at (10.4,1) {level 1};
\node[font=\scriptsize,right] at (10.4,2) {level 2};
\end{tikzpicture}
\caption{The graph $\Gamma_2$ for $\ell_2 = \tx\tx\ta\ta\ty\tx\ta\ta\ty\ty$.
Short
solid arrows are bundles of five edges $z_1,\ldots,z_5$ (across the letters $\tx$
and $\ty$); dashed arrows are the leaf edges $a$; arcs are the jumps over the
internal vertices $[1,5]$, $[5,9]$ and $[0,10]$.}
\label{fig:gamma}
\end{figure}

We will use the following properties of $\Gamma_N$:

\noindent(G1) Every edge goes forward: $p\to q$ with $q>p$;

\noindent(G2) Edges $z_i$ change the height by $\pm1$; jumps preserve it.
An edge from
$p$ skips only gaps strictly inside a vertex starting at $p$, so all skipped gaps have height $h>\eta(p)$;

\noindent(G3) From each gap there is at most one edge with a given label.

By (G3), a path is determined by its initial gap and its
word.

\subsection{The representation $T_S$}

Let $S$ be a chain of height $N$.
To a gap $p$ we attach the space
$K^{d_{\eta(p)}}$, and we put $V_S = \bigoplus_{p = 0}^{L_N}K^{d_{\eta(p)}}$, so that
endomorphisms of $V_S$ are block matrices indexed by pairs of gaps.
The
\emph{weight} of an edge $z_i$ going from level $j$ to level $j+1$ is $X_{j,i}$;
that of an edge $z_i$ going from level $j+1$ to level~$j$ is $Y_{j,i}$; the weight of a jump is the identity matrix.
For a~letter $c\in\{a,z_1,\ldots,z_5\}$, the
$(p,q)$ block of $T_S(c)$ is the weight of the edge $p\to q$ labelled~$c$, or~0 if there is no such edge.
We index blocks by (initial gap, final gap); with this convention,
block multiplication concatenates directed paths from left to right.
This defines a homomorphism
$T_S\colon A\to\End(V_S)$.
We use the same notation for its canonical unital extension
$A^\#\to\End(V_S)$ when the empty word is allowed.
By the rule for multiplying block matrices,
the $(p,q)$ block of $T_S(w)$ for a~word~$w$, is the weight of the path from~$p$ to $q$ with word~$w$ (i.\,e. the product of the weights of its edges in
order), or 0~if there is no such path.

\begin{lemma}\label{defect}
a) A path with word $aa$ consists of the jumps over two siblings, from the start
of the first to the end of the second.
There are no paths with word $aaa$; hence
$T_S(a)^3 = 0$.

b) A path with word $z_iaaz_j$ enters an internal vertex $[p,q]$, jumps over both
of its children and leaves it; thus it goes from $p$ to $q$.

c) The root block $T_S(a)_{0,L_N}$ equals $I_D$, and the only nonzero blocks of
$T_S(\omega)$ are the entries $1$ at the $2^N$ leaf edges.
Consequently,
$\rk T_S(a)\ge D$ and $\rk T_S(\omega) = 2^N$.
\end{lemma}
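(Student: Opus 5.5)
The whole proof reads paths off the combinatorics of $\Gamma_N$. By the block-multiplication rule, the $(p,q)$ block of $T_S(w)$ is the weight of the unique path (by (G3)) from $p$ with word $w$. So every claim reduces to a structural statement about paths, plus one weight computation.

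\emph{Part (a).} An $a$-edge from gap $p$ is the jump over a vertex starting at $p$. For each $p$ there is exactly one such vertex if $p$ is a starting gap, namely the largest vertex beginning at $p$ whose jump is the unique $a$-edge; uniqueness is guaranteed by (G3). Let the first jump be over $u=[p,m]$. I look at the letter following position $m$ in $\ell_N$.
\begin{itemize}
\item If that letter is $\ty$, no vertex starts at $m$, since every vertex begins with $\tx$ or $\ta$.
\item Otherwise $m$ is interior to the parent $P=\tx\,\ell\,\ell\,\ty$ of $u$. Then $u$ is the first child, and a vertex starting at $m$ is the second child or one of its leftmost descendants. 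The unique $a$-edge from $m$ is the jump over the largest such vertex, which is the sibling $[m,q-1]$.
\end{itemize}
A third jump would have to start at $q-1$. But position $q-1$ is followed by the letter $\ty$ closing $P$, so no vertex starts there. This rules out $aaa$, and hence $T_S(a)^3=0$.

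\emph{Part (b).} A path with word $aa$ starts at the start $p+1$ of a first child of some internal vertex $[p,q]$ and ends at $q-1$. Consider the $z_i$ edge entering $p+1$. It crosses a letter ending at gap $p+1$, and that letter is the $\tx$ at position $p$, so the edge goes $p\to p+1$. Similarly, the $z_j$ edge leaving $q-1$ crosses the $\ty$ and goes $q-1\to q$.

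\emph{Part (c), weight computation.} The jump over the root gives the root block $I_D$, since jump weights are identities. Hence $\rk T_S(a)\ge D$: the root block is a block of $T_S(a)$ sitting in distinct row and column blocks, and a submatrix has rank at most that of the whole matrix.

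Now compute $T_S(\omega)$ blockwise. Let $[p,q]$ be an internal vertex of level $j$. By (b), the $(p,q)$ block of $\sum_i T_S(z_ia^2z_i)$ is $\sum_i X_{j,i}\,I\,I\,Y_{j,i}$. This uses heights $j\to j+1$ into $p+1$ and $j+1\to j$ out of $q-1$, and the middle jumps have weight $I$. By the chain identity this block equals $I_{d_j}$. The paths $z_iaaz_k$ with $i\neq k$ do not appear in $\omega$, so the block of $T_S(\omega)$ at $(p,q)$ is $I-I=0$.

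By (b), every other $z_ia^2z_i$ path is of this form, so $T_S(\omega)$ has no other contributions from that part. What remains of $T_S(a)$ are the leaf edges, whose blocks are $1\times1$ since $d_N=1$. A leaf edge is never the endpoint pair of an internal vertex, so it is not cancelled. Thus $T_S(\omega)$ is a sum of $2^N$ entries $1$ in distinct rows and columns: each leaf edge $p\to p+1$ has a distinct start, and a distinct end. Hence $\rk T_S(\omega)=2^N$.

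The main obstacle is the careful identification of $a$-edges in (a): for each gap we must determine exactly which jump leaves it, and in particular check that after a jump ending at $q-1$ no further jump starts there. I would argue this from the contour-word recursion, noting that every vertex begins with $\tx$ or $\ta$ and that a vertex ends exactly where its parent's $\ty$, or its sibling, begins.
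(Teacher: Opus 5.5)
Your proof is correct and follows the paper's argument: two consecutive jumps are forced over two siblings, no vertex starts at the gap before the parent's closing $\ty$, the two $z$-edges must cross the parent's $\tx$ and $\ty$, and the chain identity cancels every internal jump, so only the $2^N$ leaf entries of $T_S(\omega)$ survive. One correction: distinct vertices never start at the same gap, because an internal vertex begins with $\tx$ and so its first child starts one gap later. Your phrases ``the largest vertex beginning at $p$'' and ``one of its leftmost descendants'' therefore rest on a wrong picture of the tree, though harmlessly: the sibling does start at $m$, and (G3) then identifies the unique $a$-edge from $m$ (the case $m=L_N$, where no letter follows, should also be listed with the $\ty$ case).
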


\begin{proof}
(a) Two consecutive $a$-jumps can occur only over two siblings: the jump over the first child ends at the initial gap of the second, whereas after the second child no vertex starts.
Hence there are no paths with word $aaa$.

(b) By (a), the middle part $aa$ jumps over the two children of an internal vertex $P=[p,q]$.
The two $z$-edges necessarily cross the initial $\tx$ and the terminal $\ty$ of~$P$, so the whole path goes from $p$ to~$q$.

(c) The nonzero blocks of $T_S(a)$ are exactly the jumps.
For an internal vertex $[p,q]$ of level~$t$, part~(b) gives the corresponding block
$\sum_iX_{t,i}Y_{t,i}=I_{d_t}$ of $\sum_iT_S(z_ia^2z_i)$, which cancels the jump block $T_S(a)_{p,q}$.
Thus $T_S(\omega)$ consists exactly of the $2^N$ leaf blocks $1=I_{d_N}$; they lie in distinct block rows and columns, so $\rk T_S(\omega)=2^N$.
The root jump is the block $I_D$, hence $\rk T_S(a)\ge D$.
\end{proof}

Thus, the tree separates the two quantities that will matter in Proposition 3.2: the root contributes $D$ independent directions to $T_S(a)$, whereas $T_S(\omega)$ sees only the $2^N$ leaves.
From now on the only task is to make prescribed elements nilpotent without destroying the inequality $D>4^N$.

\section{Crossings and the cut tree}\label{s:passes}

Let $S$ be a chain of height $N$, and fix a~divisor~$n$ of~$N$.
The heights~$L$ divisible by~$n$ are called \emph{grid heights},
and gaps of grid height are \emph{grid gaps}.

\begin{definition}
A \emph{crossing} of a path is a segment of it between two consecutive visits to grid
gaps whose heights are different.
\end{definition}
By (G2), the endpoints of a crossing have heights~$L$ and $L+n$ (in one of the two orders) for some grid height~$L\in \{0,n,2n,\ldots,N-n\}$, and all its interior gaps have heights strictly between $L$ and $L+n$.
A crossing has at least $n$~edges.
Crossings of a path do not overlap.

\begin{lemma}\label{long}
Every path in $\Gamma_N$ with at least $2^{2n+1}$ edges contains a crossing.
\end{lemma}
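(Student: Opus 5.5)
We argue by contraposition: a path with no crossing is short. Suppose a path $P$ in $\Gamma_N$ contains no crossing. Then any two grid gaps visited by $P$ have the same height. Indeed, the grid heights occurring along $P$ change only across a segment between consecutive grid visits, and such a segment with different endpoint heights is a crossing. Two cases remain. Either $P$ visits no grid gap at all, or every grid gap it visits has one fixed grid height $L$.

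By (G2), heights change by at most $1$ per edge. In the first case all heights along $P$ lie strictly between two consecutive grid heights $L$ and $L+n$. In the second case all heights along $P$ lie in $(L-n,L+n)$, meeting the grid only at height $L$. (If $P$ went to height $L\pm n$ it would pass a grid gap of a different height, since heights move by steps of $\pm1$.) So in every case the heights along $P$ lie in a window $W$ of at most $2n-1$ consecutive values, with at most one grid value inside.

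Next we bound the length of a path whose heights stay in a window $[u,u+w]$, where $w\le 2n-2$. The plan is to use the tree structure through the contour word. Let $p$ be the first gap of $P$. By (G1) all gaps of $P$ lie to the right of $p$, and by (G2) every skipped gap has height above its source.

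We locate $P$ inside one subtree. Let $v$ be the smallest vertex of level at least $u-1$ containing the initial gap; more precisely, take a vertex $V$ of level $\ge u-1$ that contains the starting point. Leaving the interior of a vertex of level $j$ forward requires reaching height $j$ at its endpoint $q$. After $q$ the path can continue at level $j$ only through siblings and then must go up to $j-1$.

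A cleaner route is induction: the number of gaps of height $\ge u$ that form a contiguous forward run staying $\ge u$ is bounded. Any maximal interval of gaps of height $\ge u$ is the union of consecutive vertices of level $u$ inside one vertex of level $u-1$. That is at most two siblings, each spanning at most $L_{N-u}$ gaps. But the upper bound $u+w$ on the heights further restricts which edges can be used.

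The key claim is that a path staying within heights $[u,u+w]$ has at most $2\cdot(2^{w+1}\cdot 2)$ or so edges. We prove it by counting per-level visits: at height $h$, the path inside a vertex of level $h$ truncated at depth $w$ behaves like traversing a binary tree of height $w$. Such a traversal uses each gap at most once by (G1). The truncated tree has $O(2^{w+1})$ contour gaps, and the two sibling vertices double this.

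With $w\le 2n-2$ this gives fewer than $2^{2n+1}$ edges, which is the claim.

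The main obstacle is making the last count precise. Jumps skip deeper gaps, so only gaps of height in $[u,u+w]$ are ever visited. These visited gaps, within one vertex of level $u-1$ (or two adjacent ones at level $u$), correspond bijectively to the contour of a binary tree of height at most $w+1$. Such a contour has at most $3\cdot 2^{w+1}$ gaps. With $w+1\le 2n-1$ this gives at most $3\cdot 2^{2n-1}<2^{2n+1}$ gaps. Since the gaps along $P$ are distinct by (G1), $P$ has fewer than $2^{2n+1}$ edges.
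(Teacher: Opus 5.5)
Your argument is correct and is essentially the paper's: absence of a crossing confines the heights to a window $[u,u+w]$ with $w\le 2n-2$. The path is then trapped in the two children of a vertex of level $u-1$, or in the root when $u=0$ (a case you should state explicitly), and counting the gaps of the truncated contour together with (G1) gives fewer than $2^{2n+1}$ edges. In a write-up, drop the false starts (the ``smallest vertex of level at least $u-1$'' and the ``or so'' estimate) and state the confinement step cleanly: an edge from $p$ skips only gaps of height $>\eta(p)\ge u$, so the path can neither skip nor land on the end gap (height $u-1$) of the level-$(u-1)$ vertex containing its first gap.
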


\begin{proof}
Note that a path whose gap heights lie in an interval
$[b,b+R]$ has at most $2L_R$ edges.
If $b=0$, it uses at most $L_R+1$ gaps of height at most $R$.
If $b>0$, it stays inside the two children of a vertex of level $b-1$:
leaving them would force a visit to height $b-1$.
Each child contains at most $L_R+1$ gaps of relative height at most~$R$,
and the two children share one gap.
Indeed, truncating a subtree at relative height~$R$ gives $\ell_{\min(h,R)}$, hence at most $L_R+1$ such gaps.
Thus, in either case the path uses at most $2L_R+1$ gaps.
By~(G1), no gap is visited twice, so it has at most $2L_R$ edges.

Now suppose that the path has no crossing.
If it visits grid gaps, all grid gaps visited have the same height~$g$,
so all its heights lie in $[g-n+1,g+n-1]$.
If it visits no grid gap, all its heights lie strictly between two consecutive grid heights.
Hence in either case the heights lie in an interval of width at most~$2n-2$, and therefore the path has at most
$2L_{2n-2} = 6\cdot2^{2n-2}-4  < 2^{2n+1}$ edges.
\end{proof}

\subsection{The cut tree}\label{s:cut}
Let $L$ be a grid height with $L+n\le N$.
The graph $\Gh_n$ is obtained from $\Gamma_n$ by deleting its $2^n$ leaf edges $a$; the jumps over internal vertices are kept.
Spaces and weights are taken from the chain shifted by~$L$: to a gap~$p$ we attach $K^{d_{L+\eta(p)}}$, and the edges get the matrices $X_{L+j,i}$, $Y_{L+j,i}$.
This gives a~homomorphism
$$
\Th_{S,L}\colon A \to\End\Bigl(\mathop{\bigoplus}\limits_p K^{d_{L+\eta(p)}}\Bigr)
$$
with the same rule ``block $ = $ weight of path''.
When empty words are allowed, we use the same notation for the
canonical unital extension to $A^\#$.

Each of the $2^{n-1}$ lowest internal vertices of $\Gamma_n$ (a subword
$\tx\ta\ta\ty$) had two leaf edges.
After their deletion, its first bottom gap has
incoming but no outgoing edges, its last bottom gap has outgoing but no incoming edges, and its middle gap is isolated.
The first ones are called \emph{entry gaps}, the last ones \emph{exit gaps}.
A \emph{boundary pair} is a pair $(0,g')$ with $g'$ an
entry gap, or a pair $(g,L_n)$ with $g$ an exit gap.
There are $2^{n-1}+2^{n-1} = 2^n$ boundary pairs.

\begin{figure}[ht]
\centering
\begin{tikzpicture}[x=11mm,y=-10mm,>={Stealth[length=2mm]},
  dot/.style={circle,fill,inner sep=1.3pt},
  ent/.style={circle,draw,fill=white,inner sep=1.8pt,thick},
  ext/.style={rectangle,draw,fill=black!25,inner sep=2.2pt,thick},
  iso/.style={circle,draw,inner sep=1.3pt}]
\def\H{{0,1,2,2,2,1,2,2,2,1,0}}
\foreach \p in {0,1,5,9,10}{\pgfmathsetmacro\h{\H[\p]}\node[dot] (g\p) at (\p,\h) {};}
\foreach \p in {2,6}{\node[ent] (g\p) at (\p,2) {};}
\foreach \p in {4,8}{\node[ext] (g\p) at (\p,2) {};}
\foreach \p in {3,7}{\node[iso] (g\p) at (\p,2) {};}
\foreach \p in {0,...,10}{\pgfmathsetmacro\h{\H[\p]}\node[above=2pt,font=\scriptsize] at (\p,\h) {$\p$};}
\foreach \a/\b in {0/1,1/2,5/6}{\draw[->,thick] (g\a)--(g\b);}
\foreach \a/\b in {4/5,8/9,9/10}{\draw[->,thick] (g\a)--(g\b);}
\draw[->] (g1) .. controls +(0.6,2.2) and +(-0.6,2.2) .. node[below,font=\scriptsize]{$a$} (g5);
\draw[->] (g5) .. controls +(0.6,2.2) and +(-0.6,2.2) .. node[below,font=\scriptsize]{$a$} (g9);
\draw[->] (g0) .. controls +(2.5,-1.3) and +(-2.5,-1.3) .. node[above,font=\scriptsize]{$a$} (g10);
\node[font=\scriptsize,right] at (10.4,0) {level $L$};
\node[font=\scriptsize,right] at (10.4,1) {level $L+1$};
\node[font=\scriptsize,right] at (10.4,2) {level $L+2$};
\end{tikzpicture}
\caption{The cut graph $\Gh_2$.
The leaf edges are removed.
White circles are the entry gaps $2,6$, grey squares the exit gaps $4,8$, and the gaps $3,7$ are isolated.
The boundary pairs are $(0,2)$, $(0,6)$, $(4,10)$, $(8,10)$.
For example, the path $0\to1\to2$ (word $z_i z_j$) is a~crossing from~0 to the entry gap~2. The path
$0\to1\xrightarrow{a}5\to6$ is a crossing to the entry gap~6 that jumps over the first vertex of level $L+1$.}
\label{fig:cut}
\end{figure}

\begin{lemma}\label{cut}
Every crossing between heights $L$ and $L+n$ lies in a~unique vertex~$v$ of level~$L$.
Identify the gaps of $\Gh_n$ with the gaps of $v$ that do not lie strictly inside vertices of level $L+n$.
Then the crossings between $L$ and $L+n$ lying in $v$ are
exactly the paths of $\Gh_n$ between boundary pairs.
The correspondence preserves words and weights.
\end{lemma}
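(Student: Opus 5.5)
The plan is to work entirely with the combinatorics of the contour word and the properties (G1)--(G3), in three steps: locate the vertex $v$, build the identification, and then match paths.

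\emph{Locating $v$.} Take a crossing, say from height $L$ to height $L+n$; the reverse direction is symmetric, because edges only go forward. Let $p$ be its initial gap. The first edge cannot be a jump, since a jump keeps height $L$ and would end at a grid gap of the same height, contradicting the definition of a crossing. So the first edge is a $z$-edge raising the height to $L+1$, and by (G2) it crosses a letter $\tx$. That letter $\tx$ is the first letter of a unique vertex $v$ of level $L$, namely the subword $\tx\,\ell\,\ell\,\ty$ starting at $p$. While the path stays at heights $>L$, it cannot leave $v$: leaving would require crossing the terminal $\ty$ of $v$, which lands at height $L$. Jumps skip only gaps strictly inside vertices starting at the current gap (G2), so they do not escape $v$ either. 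For a crossing that ends at height $L$, I argue dually with the last edge, which must cross the terminal $\ty$ of $v$. Uniqueness of $v$ follows because vertices of level $L$ have disjoint interiors.

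\emph{The identification.} The vertex $v$ is the word $\ell_{N-L}$ in which height is measured relative to $L$. Each vertex of level $L+n$ inside $v$ is an occurrence of $\ell_{N-L-n}$, and I collapse each such occurrence to a single letter $\ta$. This turns $\ell_{N-L}$ into $\ell_n$, by induction on the recursion $\ell_{h+1}=\tx\ell_h\ell_h\ty$, truncated at depth $n$. Under this map, the gaps of $v$ not strictly inside level-$(L+n)$ vertices correspond bijectively to the gaps of $\Gamma_n$, with heights shifted by $L$. The $z$-edges correspond to $z$-edges and carry the same weights $X_{L+j,i}$ and $Y_{L+j,i}$. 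Jumps over vertices of level $<L+n$ correspond to jumps over the internal vertices of $\Gamma_n$. The jumps over level-$(L+n)$ vertices correspond to the leaf edges $a$ of $\Gamma_n$, which are precisely the edges deleted in $\Gh_n$.

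\emph{Matching paths.} Consider the interior of a crossing. It has heights strictly between $L$ and $L+n$, so it never visits a gap of height $L+n$ except at its endpoint. In particular it never uses a jump over a level-$(L+n)$ vertex, because such a jump starts at height $L+n$. Hence it is a path in $\Gh_n$. Its start is gap $0$ (the start of $v$), or else an exit gap: a height-$(L+n)$ gap from which the path leaves by an edge, and that edge must be a $\ty$-edge, so the gap is the last bottom gap of a lowest vertex. Its end is dually $L_n$ or an entry gap. A crossing cannot start at $0$ and end at $L_n$: that would stay at height $L$, or pass through height $L$ again, which is not allowed. So its endpoints form a boundary pair.

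Conversely, take any $\Gh_n$-path between boundary pairs. By (G2), and because entry gaps have no outgoing edges and exit gaps have no incoming edges, its interior avoids heights $L$ and $L+n$. Lifting it through the identification therefore gives a crossing. Words and weights agree edge by edge, since the identification is label- and weight-preserving. The main obstacle is the bookkeeping in the collapse $\ell_{N-L}\to\ell_n$, in particular that the gap sets match and that no jump in $\Gh_n$ acquires a spurious endpoint; I would handle this by the induction on $h$.
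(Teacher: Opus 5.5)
Your proof is correct and follows essentially the same route as the paper. The first edge crosses the initial $\tx$ of a unique level-$L$ vertex $v$, and the crossing cannot leave $v$ or enter a level-$(L+n)$ vertex without an extra visit to a grid height. Conversely, a boundary-pair path of $\Gh_n$ has no interior gaps at relative height $0$ or $n$.

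You also spell out the identification by collapsing level-$(L+n)$ subwords to $\ta$, which the paper takes for granted. Two small points in the converse: the absence of interior gaps at relative height $0$ comes from (G1), since only gaps $0$ and $L_n$ have that height, not from (G2). And you should also note that the middle bottom gaps are isolated, which you did not mention.
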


\begin{proof}
Consider first a crossing from height~$L$ to height~$L+n$.
Its first edge is a $z_i$-edge increasing the height, hence it crosses the initial~$\tx$ of a~unique vertex~$v$ of level~$L$.
The crossing cannot leave~$v$, since this would require another visit to height~$L$, and it cannot enter the interior of a~vertex of level $L+n$, since it would first have to visit its initial gap, already of height $L+n$.
Thus, after identifying the gaps of~$\Gh_n$ with the gaps of~$v$ that do not lie strictly inside vertices of level $L+n$, the crossing is a~path from~0 to an entry gap.
A~crossing from height $L+n$ to height $L$ is treated in the same way and gives a path from an exit gap to $L_n$.

For $L>0$ the local geometry is illustrated below; for $L=0$, $v$~is the root.

\begin{center}
\begin{tikzpicture}[x=10mm,y=-9mm]
\draw[black!45] (-0.3,0)--(9.3,0) node[right,black,font=\scriptsize]{level $L-1$};
\draw[black!45] (-0.3,1.25)--(9.3,1.25) node[right,black,font=\scriptsize]{level $L$};
\fill (0,0) circle (2pt); \fill (9,0) circle (2pt);
\fill (1,1.25) circle (2pt); \fill (4.5,1.25) circle (2pt); \fill (8,1.25) circle (2pt);
\draw[thick,->] (0,0)--node[left,font=\scriptsize]{$\tx$}(1,1.25);
\draw[thick] (1,1.25) .. controls (1.7,3.1) and (3.8,3.1) .. (4.5,1.25);
\draw[thick] (4.5,1.25) .. controls (5.2,3.1) and (7.3,3.1) .. (8,1.25);
\draw[thick,->] (8,1.25)--node[right,font=\scriptsize]{$\ty$}(9,0);
\node[font=\scriptsize] at (2.75,2.35) {$v_1$ (level $L$)};
\node[font=\scriptsize] at (6.25,2.35) {$v_2$ (level $L$)};
\node[font=\scriptsize] at (4.5,3.25) {inside each child: height $>L$};
\draw[decorate,decoration={brace,amplitude=4pt,mirror}] (0,-0.35)--(9,-0.35)
 node[midway,above=5pt,font=\scriptsize]{$P$ (level $L-1$): all interior gaps have height $>L-1$};
\end{tikzpicture}
\end{center}

Conversely, let a path of $\Gh_n$ go from~0 to an entry gap $g'$.
It has no interior gaps at height~0, where the only gaps are~0 and~$L_n$.
It has none at height~$n$ either: no edge leaves an entry gap, no edge enters an exit gap, and the middle gaps are isolated.
All edges of~$\Gh_n$ are edges of~$\Gamma_N$ inside~$v$ with the same weights.
Hence the path is a~crossing in $\Gamma_N$.
Pairs $(g,L_n)$ are treated analogously.
\end{proof}

\subsection{Boundary conditions}
Let $W_d$ be the set of all words of length at most~$d$ in the letters
$a,z_1,\ldots,z_5$, including the empty word, and let
$s(d) = |W_d| = 1+6+\ldots+6^d$.

The conditions below are chosen to annihilate all words that can occur on a first crossing in Section~\ref{s:first}.

\begin{definition}\label{conditions}
Let $0\ne f\in A$, $d = \deg f<n$ and $n\mid N$.
A chain $S$ of height $N$
\emph{satisfies the conditions $(f,n)$} if for every $L$ such that $0\leq L\leq N-n$ and $n\mid L$, every
boundary pair $(g,g')$, all $u,v\in W_d$ and all $0\le r\le L_n$,
\begin{equation}\label{BC}
 \Th_{S,L}(u\,f^r\,v)_{g,g'} = 0 .
\end{equation}
\end{definition}
For $r>L_n$ the equalities~\eqref{BC} hold automatically: every monomial of $uf^rv$ has more than $L_n$ letters, since the monomials of~$f$ are nonempty, while
by (G1) a~path in $\Gh_n$ has at most $L_n$ edges.

\section{The first-crossing decomposition}\label{s:first}

Let $S$ be a chain of height~$N$, let $n\mid N$, and let
$f = \sum_{w\in\supp f}c_w\,w\in A$, $d = \deg f<n$, and $e\ge 2^{2n+1}$.
We first display the splitting used throughout this section.
For a contributing word,
once its first crossing begins inside $w_\alpha$ and ends inside $w_\beta$, write
$w_\alpha = u'u$ and $w_\beta = vv'$, and write down the whole word as follows:
$$
w_1\ldots w_e
 = \underbrace{w_1\ldots w_{\alpha-1}u'}_{\text{prefix}}
   \underbrace{u\,w_{\alpha+1}\ldots w_{\beta-1}v}_{\text{first crossing}}
   \underbrace{v'w_{\beta+1}\ldots w_e}_{\text{suffix}}.
$$
Expanding the power,
$$
T_S(f^e)_{p,q} = \sum_{(w_1,\ldots,w_e)}c_{w_1}\ldots c_{w_e}T_S(w_1\ldots w_e)_{p,q},
$$
where the sum runs over all sequences of monomials $w_k\in\supp f$.
A sequence contributes only if there is a~path~$\gamma$ from~$p$ to~$q$ with word $w_1\ldots w_e$; this path is unique by (G3).
It has at least $e$~edges, so by Lemma~\ref{long} it contains a crossing.
Let $\gamma[g\to g']$ be its first crossing.

We attach to the sequence the following data:
\begin{itemize}
\item the indices $\alpha\le\beta$ of the monomials containing the beginning and
the end of the crossing (a beginning at a junction of two monomials is assigned to the right one, an end to the left one);
\item the factorizations $w_\alpha = u'u$ and $w_\beta = vv'$ at the points~$g$ and~$g'$, so that $0\leq|u'|<|w_\alpha|$ and $0<|v|\le|w_\beta|$;
\item all monomials $w_k$ with $k\notin(\alpha,\beta)$;
\item the gap $g'$.
\end{itemize}
The crossing has at least $n>d\ge|w_k|$ edges, so $\alpha<\beta$, and its word is
$u\,w_{\alpha+1}\ldots w_{\beta-1}\,v$, where $u,v\in W_d$ and $r = \beta-\alpha-1\geq0$.
By Lemma~\ref{cut} and (G1) the crossing has at most $L_n$ edges, and each monomial contributes at least one edge, so $r\leq L_n$.
These data determine the prefix and suffix, the gap~$g$, the grid height
$L=\min(\eta(g),\eta(g'))$, and the boundary pair $(\hat g,\hat g')$ corresponding to $(g,g')$ under Lemma~\ref{cut}.
Fig.~\ref{fig:first} illustrates the decomposition.

\begin{figure}[ht]
\centering
\begin{tikzpicture}[x=5.2mm,y=4.2mm,>={Stealth[length=1.8mm]}]
\draw[black!45] (-0.5,0)--(26.5,0) node[right,black,font=\scriptsize]{$L+n$};
\draw[black!45] (-0.5,4)--(26.5,4) node[right,black,font=\scriptsize]{$L$};
\draw[black!45] (-0.5,8)--(26.5,8) node[right,black,font=\scriptsize]{$L-n$};
\draw[thick] (0,5)--(1,6)--(2,7)--(3,6)--(4,5)--(5,4)--(6,5)--(7,5)--(8,4);
\draw[very thick,blue] (8,4)--(9,3)--(10,2)--(11,2)--(12,3)--(13,2)--(14,1)--(15,1)--(16,0);
\draw[thick] (16,0)--(17,1)--(18,1)--(19,0)--(20,-1)--(21,-1)--(22,0)--(23,1)--(24,2)--(25,2);
\foreach \x/\y in {5/4,8/4,16/0,19/0,22/0}{\fill (\x,\y) circle (2.4pt);}
\node[below,font=\scriptsize] at (8,4) {$g$};
\node[below right,font=\scriptsize] at (16,0) {$g'$};
\draw[decorate,decoration={brace,amplitude=4pt,mirror}] (0,-1.8)--(8,-1.8)
  node[midway,below=4pt,font=\scriptsize]{prefix: no crossing};
\draw[decorate,decoration={brace,amplitude=4pt,mirror}] (8,-1.8)--(16,-1.8)
  node[midway,below=4pt,font=\scriptsize]{first crossing $u\,w_{\alpha+1}\ldots w_{\beta-1}\,v$};
\draw[decorate,decoration={brace,amplitude=4pt,mirror}] (16,-1.8)--(25,-1.8)
  node[midway,below=4pt,font=\scriptsize]{suffix};
\end{tikzpicture}
\caption{The first-crossing decomposition (heights are plotted downwards, as in the
tree).
Dots mark visits to grid gaps.
The prefix visits grid gaps of one height only.
The first crossing (thick, blue) runs from $g$ to $g'$ between two neighbouring grid heights.}
\label{fig:first}
\end{figure}

\begin{lemma}\label{factor}
We have
$$
T_S(f^e)_{p,q} = \sum_{\text{data}}
 \Bigl(\prod_{k\in\{1,\ldots,e\}\setminus \{\alpha+1,\ldots,\beta-1\}}c_{w_k}\Bigr)\,
 \wt(\text{prefix})\cdot
 \Th_{S,L}\bigl(u\,f^{r}\,v\bigr)_{\hat g,\hat g'}\cdot
 \wt(\text{suffix}),
$$
where the sum runs over all data that occur for at least one contributing sequence;
here $u,v\in W_d$, $0\leq r\leq L_n$, and $(\hat g,\hat g')$ is a boundary pair.
\end{lemma}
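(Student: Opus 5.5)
We regroup the expansion of $T_S(f^e)_{p,q}$ according to the data. Each contributing sequence determines its data uniquely, since the path $\gamma$ is unique by (G3) and its first crossing is well defined. So the sum over sequences splits into a sum over data classes. Within one class, the monomials $w_k$ with $k\notin(\alpha,\beta)$ are fixed, together with $u',u,v,v'$ and $g'$. The class is therefore parametrized by the middle tuple $(w_{\alpha+1},\ldots,w_{\beta-1})\in(\supp f)^r$. Since the data determine the prefix word and the gap $g$, they determine $p$ and the prefix path from $p$ to $g$. Likewise they determine the suffix path from $g'$ to $q$. We will show that within a class the sum of the remaining factors is exactly $\Th_{S,L}(u f^r v)_{\hat g,\hat g'}$.

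First, for a fixed sequence, the weight factorizes. By the block rule, $T_S(w_1\ldots w_e)_{p,q}$ is the weight of $\gamma$. Splitting $\gamma$ at $g$ and $g'$, this weight equals $\wt(\text{prefix})\cdot\wt(\gamma[g\to g'])\cdot\wt(\text{suffix})$. By Lemma~\ref{cut}, the crossing $\gamma[g\to g']$ corresponds to a path of $\Gh_n$ from $\hat g$ to $\hat g'$ with the same word and weight. Hence its weight is $\Th_{S,L}(u\,w_{\alpha+1}\cdots w_{\beta-1}\,v)_{\hat g,\hat g'}$.

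The key step, and the main obstacle, is the converse. Fix the data and take any middle tuple $(w'_{\alpha+1},\ldots,w'_{\beta-1})$, not necessarily one coming from a contributing sequence. We must show that if $\Th_{S,L}(u\,w'_{\alpha+1}\cdots w'_{\beta-1}\,v)_{\hat g,\hat g'}\neq0$, then the modified sequence contributes and has the same data. Only then does summing over all tuples give the full polynomial $f^r$ without spurious terms.

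To prove this, take the $\Gh_n$-path from $\hat g$ to $\hat g'$. By Lemma~\ref{cut} it is a crossing $\delta$ in $\Gamma_N$ from $g$ to $g'$ inside the vertex $v$ of level $L$, with word $u\,w'\cdots w'\,v$. Concatenating the fixed prefix, $\delta$, and the fixed suffix gives a path from $p$ to $q$ with word $w_1\ldots w_{\alpha-1}w'_{\alpha+1}\ldots w'_{\beta-1}w_{\beta+1}\ldots w_e$, with $w_\alpha,w_\beta$ in their places. So the new sequence contributes. Its first crossing is still $\delta$: the prefix contains no crossing, since it was the part before the first crossing of $\gamma$ and is unchanged. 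Hence $\delta$, which is itself a crossing, is the first one. The indices $\alpha,\beta$, the factorizations, the outer monomials and $g'$ are unchanged, so the data coincide.

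Finally, summing over the class gives
\[
\sum_{w'}c_{w'_{\alpha+1}}\cdots c_{w'_{\beta-1}}\Th_{S,L}(u\,w'_{\alpha+1}\cdots w'_{\beta-1}\,v)_{\hat g,\hat g'}=\Th_{S,L}(u f^r v)_{\hat g,\hat g'}
\]
by multilinearity. The prefix and suffix weights and the outer coefficients factor out, which yields the stated formula. The bounds $u,v\in W_d$ and $0\le r\le L_n$ were established before the lemma.
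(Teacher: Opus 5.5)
Your proposal is correct and follows essentially the same route as the paper. You partition the sum by data and identify each class with the middle tuples whose word $u\,w_{\alpha+1}\cdots w_{\beta-1}\,v$ labels a $\Gh_n$-path from $\hat g$ to $\hat g'$. For this you use Lemma~\ref{cut} in one direction, and in the other the concatenation argument together with the fact that the fixed prefix contains no crossing. You then factor the weights and sum by multilinearity to obtain $\Th_{S,L}(uf^rv)_{\hat g,\hat g'}$.
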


\begin{proof}
Every contributing sequence has exactly one set of data.
Fix data that occur and sum over the inner monomials $w_{\alpha+1},\ldots,w_{\beta-1}$.
We claim that the
sequences with these data are exactly those for which
$u\,w_{\alpha+1}\ldots w_{\beta-1}\,v$ is the word of a~path in $\Gh_n$ from $\hat g$ to~$\hat g'$.

If a sequence has these data, its first crossing gives such a path by Lemma~\ref{cut}.
Conversely, suppose that such a path exists.
By Lemma~\ref{cut} it corresponds to a crossing from~$g$ to $g'$.
Attaching the fixed prefix and suffix gives a path from~$p$ to~$q$ with word $w_1\ldots w_e$; since the prefix has no crossing, $g\to g'$ is its first crossing.
Hence the data of this sequence are the given ones.

The weight of the whole path is the product of the weights of the prefix, the crossing and the suffix, and the weight of the crossing equals the weight of the corresponding
path in $\Gh_n$.
Summing these weights with the coefficients
$c_{w_{\alpha+1}}\ldots c_{w_{\beta-1}}$ gives exactly the block
$\Th_{S,L}(uf^rv)_{\hat g,\hat g'}$.
The words $u$ and $v$ are fixed fragments; only the~$r$ complete copies of~$f$ contribute coefficients inside the middle block.
\end{proof}

\begin{theorem}\label{nilpotence}
If a chain $S$ of height $N$ satisfies the conditions $(f,n)$, then $T_S(f)^{2^{2n+1}} {=} 0$.
\end{theorem}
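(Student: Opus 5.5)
The plan is to set $e=2^{2n+1}$ and show that every block $T_S(f^e)_{p,q}$ vanishes, using the first-crossing decomposition of Lemma~\ref{factor}. Since $T_S$ is a homomorphism, $T_S(f)^{e}=T_S(f^e)$. So it suffices to show that for all gaps $p,q$ of $\Gamma_N$ the $(p,q)$ block of $T_S(f^e)$ is zero.

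Fix $p,q$ and expand $T_S(f^e)_{p,q}$ as a sum over sequences $(w_1,\ldots,w_e)$ of monomials of $f$. Sequences with no path from $p$ to $q$ contribute nothing. For a contributing sequence, the unique path $\gamma$ (by (G3)) has at least $e=2^{2n+1}$ edges, since every monomial of $f$ is nonempty. By Lemma~\ref{long}, $\gamma$ therefore has a first crossing. This is exactly the situation in which the data of Section~\ref{s:first} are defined; the hypotheses $d=\deg f<n$ and $e\ge 2^{2n+1}$ match.

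Lemma~\ref{factor} then rewrites $T_S(f^e)_{p,q}$ as a finite sum of terms, each containing the middle factor $\Th_{S,L}(u f^r v)_{\hat g,\hat g'}$. Here $L$ is a grid height with $0\le L\le N-n$ and $n\mid L$ (the endpoints of a crossing have heights $L$ and $L+n$ with $L+n\le N$). Also $u,v\in W_d$, $0\le r\le L_n$, and $(\hat g,\hat g')$ is a boundary pair. This is precisely the range of quantifiers in Definition~\ref{conditions}, so by \eqref{BC} every middle factor is zero. Hence each term vanishes, so $T_S(f^e)_{p,q}=0$ for all $p,q$ and $T_S(f)^{2^{2n+1}}=0$.

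The only step requiring care is checking that the data really land inside the quantifier range of the conditions $(f,n)$:
\begin{itemize}
\item $u,v\in W_d$, because $u$ and $v$ are factors of monomials of degree at most $d$; here $u$ may be empty and $v$ is nonempty.
\item $r\le L_n$, which is noted before Lemma~\ref{factor}; in any case \eqref{BC} holds automatically for larger $r$.
\item $L$ is an admissible grid height with $L+n\le N$.
\end{itemize}
All of these are already recorded in the text preceding Lemma~\ref{factor}, so the theorem is essentially a direct combination of Lemma~\ref{long}, Lemma~\ref{factor} and Definition~\ref{conditions}.
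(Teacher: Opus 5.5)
Your proof is correct and is the paper's argument: take $e=2^{2n+1}$, use Lemma~\ref{factor} to write each block of $T_S(f)^e=T_S(f^e)$ as a sum of terms whose middle factor is one of the blocks in \eqref{BC}, and these blocks vanish by the conditions $(f,n)$. One trivial slip: by the convention $|u'|<|w_\alpha|$, it is $u'$ (and $v'$), not $u$, that may be empty; this does not matter, since $W_d$ contains the empty word.
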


\begin{proof}
By Lemma~\ref{factor} with $e = 2^{2n+1}$, every block of $T_S(f)^e = T_S(f^e)$ is a
sum of terms whose middle factor has the form~\eqref{BC}.
All these factors vanish.
\end{proof}

\section{Amplification and compression}\label{s:operations}

Compression will impose the new boundary conditions losing at most $C(f)4^n$ root dimensions.
Amplification replaces $(N,D)$ by $(kN,D^k)$ preserving all old conditions.

\begin{lemma}\label{amplification}
For every $k\geq1$, a~chain of height $N$ with root dimension $D$ can be replaced by a~chain of height $kN$ with root dimension $D^k$ so that all conditions $(f,n)$ with $n\mid N$ are
preserved.
\end{lemma}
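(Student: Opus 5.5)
The plan is to build the new chain by stacking $k$ copies of the old one and tensoring with identity matrices, so that every cut-tree representation of the new chain is an old one tensored with an identity.

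Let the given chain $S$ have dimensions $d_0=D,\ldots,d_N=1$ and matrices $X_{j,i},Y_{j,i}$. Every level $t$ with $0\le t\le kN$ can be written as $t=sN+j$ with $0\le s\le k-1$ and $0\le j\le N$. This representation is ambiguous only at multiples of $N$, where $(s,N)$ and $(s+1,0)$ both occur. Define
$$
d'_{sN+j}=d_j\,D^{k-1-s},
$$
and regard $K^{d'_{sN+j}}$ as $K^{d_j}\otimes K^{D^{k-1-s}}$. At the ambiguous levels the two readings agree: $d_N D^{k-1-s}=D^{k-1-s}=d_0D^{k-2-s}$. Moreover $d'_0=D^k$ and $d'_{kN}=d_N=1$. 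For $0\le j<N$ put
$$
X'_{sN+j,i}=X_{j,i}\otimes I_{D^{k-1-s}},\qquad Y'_{sN+j,i}=Y_{j,i}\otimes I_{D^{k-1-s}}.
$$
At $j=N$ we always use the reading $(s+1,0)$, so every level $t<kN$ gets one unambiguous pair of matrices. Then
$$
\sum_i X'_{t,i}Y'_{t,i}=\Bigl(\sum_i X_{j,i}Y_{j,i}\Bigr)\otimes I=I_{d_j}\otimes I=I_{d'_t},
$$
so $S'$ is a chain of height $kN$ with root dimension $D^k$.

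Next I check that the conditions survive. Let $n\mid N$ and suppose $S$ satisfies $(f,n)$. Then $n\mid kN$, so the conditions $(f,n)$ make sense for $S'$. Take a grid height $L$ of $S'$ with $n\mid L$ and $L+n\le kN$. Since $n\mid N$, the band $[L,L+n]$ does not straddle a multiple of $N$ in its interior. Hence $L=sN+L_0$ with $n\mid L_0$ and $L_0+n\le N$.

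The key claim is that, block by block,
$$
\Th_{S',L}(w)_{g,g'}=\Th_{S,L_0}(w)_{g,g'}\otimes I_{D^{k-1-s}}
$$
for every word $w$. The space at a gap $p$ of $\Gh_n$ is $K^{d'_{L+\eta(p)}}=K^{d_{L_0+\eta(p)}}\otimes K^{D^{k-1-s}}$. The $z$-edge weights are old weights tensored with $I$, and the jump weights are $I\otimes I$. The weight of a path is the product of its edge weights, and
$$
(A_1\otimes I)(A_2\otimes I)=A_1A_2\otimes I,
$$
so the claim holds for words. By linearity it extends to $uf^rv$. The conditions \eqref{BC} for $S$ then give the vanishing of the corresponding blocks for $S'$.

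I expect the main obstacle to be bookkeeping at the levels that are multiples of $N$. There one must check that the tensor identifications agree from both sides: a band touching level $sN$ from above uses $d_N\otimes D^{k-1-s}$, and from below uses $d_0\otimes D^{k-2-s}$. These are the same space $K^{D^{k-1-s}}$. One must also check that the band's extreme levels $L$ and $L+n$ (with $L+n$ possibly equal to $(s+1)N$) are assigned the same value of $s$ in the tensor factor. I will handle this by describing the band entirely in the block $s$, using the reading $d_N\otimes D^{k-1-s}$ at level $(s+1)N$; only the matrices $X_{L_0+j,i},Y_{L_0+j,i}$ with $L_0+j<N$ are ever used.
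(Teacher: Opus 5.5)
Your proposal is correct and follows essentially the same route as the paper. The paper also stacks $k$ copies of the chain, tensors copy $s$ with $I_{D^{k-1-s}}$, and uses $n\mid N$ to place each band in a single copy, where $\Th_{S',L}=\Th_{S,L_0}\otimes I$ blockwise, so zero blocks stay zero. Your explicit bookkeeping at the junction levels (one fixed tensor factor per band, and only the matrices $X_{L_0+j,i},Y_{L_0+j,i}$ with $L_0+j<N$ being used) spells out what the paper leaves implicit.
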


\begin{proof}
Put $k$ copies of the chain one after another and tensor the copy number
$t = 0,\ldots,k-1$ with $I_{D^{k-t-1}}$:
$$
 d'_{tN+j} = d_jD^{k-t-1},\quad
 X'_{tN+j,i} = X_{j,i}\otimes I_{D^{k-t-1}},\quad
 Y'_{tN+j,i} = Y_{j,i}\otimes I_{D^{k-t-1}},\quad 0\leq j<N,
$$
and $d'_{kN} = 1$; see Fig.~\ref{fig:amp}.
At the junctions,
$d_ND^{k-t} = D^{k-t}$ and $d_0D^{k-t-1} = D^{k-t}$, so the dimensions agree.
The identities $\sum_iX'Y' = I$ follow from the
old ones by tensoring.
Since $n\mid N$, every band $[L,L+n]$ lies in one copy, and there $\Th_{S',L}(h) = \Th_{S,L'}(h)\otimes I$ blockwise, where $L'\equiv L\pmod N$ and $n\mid L'$.
Zero blocks remain zero.
\end{proof}

\begin{figure}[ht]
\centering
\begin{tikzpicture}[x=10mm,y=-6mm]
\foreach \t/\name/\lab in {0/{$0$}/{$\otimes I_{D^{k-1}}$},1/{$1$}/{$\otimes I_{D^{k-2}}$},3/{$k-1$}/{$\otimes I_{1}$}}{
  \draw[fill=black!6] (0,{2*\t}) rectangle (4,{2*\t+2});
  \node[font=\scriptsize] at (2,{2*\t+1}) {copy \name\ of $S$};
  \node[font=\scriptsize,right] at (4.2,{2*\t+1}) {\lab};}
\node at (2,5) {$\vdots$};
\foreach \y/\lab in {0/{$0$},2/{$N$},4/{$2N$},6/{$(k-1)N$},8/{$kN$}}{
  \draw (-0.15,\y)--(0,\y); \node[left,font=\scriptsize] at (-0.2,\y) {\lab};}
\node[font=\scriptsize,right] at (6.2,0) {$d'_0=D^k$};
\node[font=\scriptsize,right] at (6.2,2) {$d'_N=D^{k-1}$};
\node[font=\scriptsize,right] at (6.2,6) {$d'_{(k-1)N}=D$};
\node[font=\scriptsize,right] at (6.2,8) {$d'_{kN}=1$};
\end{tikzpicture}
\caption{Amplification: $k$ copies of a chain stacked on top of each other.
Each band
of height $n\mid N$ lies inside one copy, where all matrices are tensored with an
identity matrix.}
\label{fig:amp}
\end{figure}

In the compression step below, where $N=n$ and hence $d_n=1$,
we will need to make outgoing boundary blocks, which are vectors in $K^D$, vanish, and incoming boundary blocks, which are linear functionals on $K^D$, vanish.
The next lemma does both simultaneously at a loss of at most one dimension per condition.

\begin{lemma}\label{linalg}
Let $v_1,\ldots,v_p\in K^D$ and let $\varphi_1,\ldots,\varphi_q\in(K^D)^*$.
Then there are an integer $D'\ge D-p-q$ and linear maps $\iota\colon K^{D'}\to K^D$ and
$\pi\colon K^D\to K^{D'}$ such that
$$
\pi\iota = I_{D'},\quad \pi(v_i) = 0,\ 1\leq i\leq p,\quad
\varphi_j\iota = 0,\ 1\leq j\leq q.
$$
\end{lemma}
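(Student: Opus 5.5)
Let $V=\Span(v_1,\ldots,v_p)\subseteq K^D$ and $U=\bigcap_{j=1}^q\ker\varphi_j\subseteq K^D$, so $\dim V\le p$ and $\dim U\ge D-q$. We need a subspace $W\subseteq U$ that embeds via $\iota$ and a projection $\pi$ onto a copy of $W$ that kills $V$. The natural choice is a complement: pick a subspace $W\subseteq U$ with $W\cap V=0$ and $\dim W=\dim U-\dim(U\cap V)$. Then $\dim W\ge (D-q)-p$. Set $D'=\dim W$ and let $\iota\colon K^{D'}\to K^D$ be an isomorphism onto $W$. Then $\varphi_j\iota=0$ for all $j$, since $W\subseteq U$.

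To build $\pi$, note that $W\cap V=0$, so $W+V=W\oplus V$. Extend this to a direct sum decomposition $K^D=W\oplus V\oplus C$ for some complement $C$. Define $\pi$ to be the projection onto $W$ along $V\oplus C$, followed by $\iota^{-1}\colon W\to K^{D'}$. This gives $\pi\iota=I_{D'}$, and $\pi(v_i)=0$ because $v_i\in V$.

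The only point requiring care is the dimension count. We have $\dim W=\dim U-\dim(U\cap V)\ge \dim U-\dim V\ge D-q-p$. If this lower bound is nonpositive, we may take $D'=0$ (or any $D'\ge0$ the construction gives). The statement only asks for $D'\ge D-p-q$, so this causes no trouble. I expect no real obstacle here; the lemma is elementary linear algebra, and the whole proof is choosing $W$ as a complement of $U\cap V$ inside $U$.
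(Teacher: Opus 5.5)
Your proof is correct and follows the paper's argument: take $W$ to be a complement of $U\cap V$ inside the common kernel $U$, let $\iota$ identify $K^{D'}$ with $W$, and let $\pi$ be the projection onto $W$ along a complement containing $V$. The dimension count $\dim W\ge \dim U-\dim V\ge D-q-p$ is the same as in the paper; only your names for the two subspaces differ.
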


\begin{proof}
Put $U = \Span\{v_1,\ldots,v_p\}$ and
$H = \bigcap_{j = 1}^q\ker\varphi_j$.
Then $\dim U\le p$ and $\dim H\ge D-q$.
Choose a complement $W$ of $H\cap U$ in~$H$.
Thus $W\cap U = (0)$ and $\dim W\ge D-q-p$.
Identify~$W$ with $K^{D'}$, where $D' = \dim W$, let
$\iota\colon K^{D'}\to K^D$ be the inclusion, and let
$\pi\colon K^D\to K^{D'}$ be the projection onto~$W$ along a~complement of~$W$ containing~$U$.
Then $\pi\iota = I_{D'}$, every $v_i$ lies in $\ker\pi$, and
$\iota(K^{D'}) = W\subseteq H\subseteq\ker\varphi_j$ for every $j$.
\end{proof}

\begin{lemma}\label{compression}
Let $0\ne f\in A$ and put $d = \deg f$.

\noindent a) Let $S$ be a chain of height $N$ and root dimension $D$, and $D'>0$.
If $\iota\colon K^{D'}\to K^D$ and $\pi\colon K^D\to K^{D'}$ satisfy $\pi\iota = I_{D'}$, then the replacement
$$
d_0\to D',\quad X_{0,i}\to\pi X_{0,i},\quad
Y_{0,i}\to Y_{0,i}\iota
$$
produces a chain.
All other dimensions and matrices remain unchanged, and every condition satisfied by~$S$ remains satisfied.

\noindent b) Suppose that $S$ has height $n>d$ and root dimension $D>C(f)4^n$, where $C(f) = 3s(d)^2$.
Then a~suitable replacement as in a) yields a~chain that
satisfies the conditions $(f,n)$ and has root dimension $D'$ such that $D'>D-C(f)4^n$.
\end{lemma}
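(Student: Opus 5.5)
The plan is to prove a) by a direct block computation. The key observation is that replacing the root space $K^D$ by $K^{D'}$ multiplies every block of every representation by $\pi$ on the left or $\iota$ on the right, or both. Part b) then follows by applying Lemma~\ref{linalg} to the finitely many boundary blocks, which for $N=n$ are vectors and functionals on $K^D$.

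For a), the chain identity at level $0$ becomes $\sum_i\pi X_{0,i}Y_{0,i}\iota=\pi I_D\iota=I_{D'}$. The identities at the other levels are untouched, so we again have a chain. For preservation of conditions, consider any band $[L,L+n]$ and the cut graph with weights taken from the shifted chain. If $L>0$, the band never sees level $0$, so nothing changes. If $L=0$, the only gaps of height $0$ (both in $\Gamma_N$ and in $\Gh_n$) are the two ends $0$ and the last gap. By (G1) a path can visit height $0$ only at its first and last gap. Hence a path from $p$ to $q$ whose first edge is some $z_i$ leaving height $0$ gets its first weight $X_{0,i}$ replaced by $\pi X_{0,i}$. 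Similarly, a last edge entering height $0$ gets $Y_{0,i}\iota$. The root jump (present only in $\Gamma_N$ with $L=0$) has weight $I_{D'}=\pi I_D\iota$. Therefore, for every word $w$, the new $(p,q)$ block equals $P\cdot(\text{old }(p,q)\text{ block})\cdot Q$. Here $P=\pi$ if $\eta(p)=0$ and $P=I$ otherwise, and $Q=\iota$ if $\eta(q)=0$ and $Q=I$ otherwise. By linearity the same holds for every element of $A^\#$, so every block that vanished before still vanishes. In particular all conditions \eqref{BC} satisfied by $S$ remain satisfied, and so does anything else phrased as the vanishing of blocks.

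For b), take $N=n$, so the only grid heights are $L=0$ and $L+n=N$, and $d_n=1$. Consider a boundary pair $(0,g')$ with $g'$ an entry gap, which has height $n\ge1$. The block $\Th_{S,0}(ufv)_{0,g'}$, for each $u f^r v$, is a $D\times1$ column, i.e.\ a vector in $K^D$. By a) its new value is $\pi$ applied to it. Now consider a pair $(g,L_n)$ with $g$ an exit gap. The block is a $1\times D$ row, i.e.\ a functional on $K^D$, and its new value is that functional composed with $\iota$. Let $\{v_i\}$ be all vectors arising from pairs $(0,g')$, and $\{\varphi_j\}$ all functionals arising from pairs $(g,L_n)$, as $u,v$ range over $W_d$ and $0\le r\le L_n$. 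There are $2^{n-1}$ entry gaps, $s(d)^2$ pairs $(u,v)$, and $L_n+1=3\cdot2^n-1$ values of $r$. Hence $p$ and $q$ are each at most $2^{n-1}s(d)^2(3\cdot 2^n-1)$, and
$$p+q\le 2^n s(d)^2(3\cdot2^n-1)<3s(d)^2 4^n=C(f)4^n.$$

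Lemma~\ref{linalg} gives $\iota,\pi$ with $\pi\iota=I_{D'}$, $\pi v_i=0$ and $\varphi_j\iota=0$. Moreover $D'\ge D-p-q>D-C(f)4^n>0$, so a) applies. After the replacement every block in \eqref{BC} is $\pi v_i=0$ or $\varphi_j\iota=0$, so the new chain satisfies the conditions $(f,n)$.

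The main point to check carefully is the factorization claim in a): that a path can meet height $0$ only at its endpoints, so that exactly one factor $\pi$ or $\iota$ appears at each end. This rests on (G1), (G2) and the fact that the root's endpoints are the only height-$0$ gaps.
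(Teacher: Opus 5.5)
Your proof is correct and follows the paper's argument. Part a) uses the same level-$0$ identity $\sum_i\pi X_{0,i}Y_{0,i}\iota=\pi\iota=I_{D'}$ and the same observation that a path meets the root level only at its first or last edge, so blocks get multiplied by $\pi$ on the left or $\iota$ on the right; part b) applies Lemma~\ref{linalg} to the $p+q\le 2^n s(d)^2(L_n+1)<C(f)4^n$ boundary vectors and functionals, exactly as the paper does.

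The only differences are cosmetic. You prove the factorization for every block of every word, not just for boundary pairs, which is harmless. The root jump $0\to L_n$ is also kept in $\Gh_n$, not only in $\Gamma_N$, but your identity $I_{D'}=\pi I_D\iota$ covers that case anyway. You also wrote $ufv$ once where you meant $uf^rv$.
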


\begin{proof}
For a),
$\sum_i\pi X_{0,i}Y_{0,i}\iota = \pi\iota = I_{D'}$.
Bands with $L>0$ do not change.
For $L = 0$, the path of a boundary pair $(0,g')$ touches the root level only at its first edge, and the path of a~boundary pair $(g,L_n)$ only at its last edge.
Hence these blocks are multiplied by $\pi$ on the left or by $\iota$ on the right, respectively, and zero blocks remain zero.

For b), $N = n$, so there is a~single band $L = 0$, and $d_n = 1$.
Denote the column blocks~\eqref{BC} belonging to the boundary pairs $(0,g')$ by $v_1,\ldots,v_p\in K^D$, and denote the row blocks belonging to the boundary pairs
$(g,L_n)$ by $\varphi_1,\ldots,\varphi_q\in(K^D)^*$.
Each triple $(u,v,r)$ gives $2^{n-1}$ columns and $2^{n-1}$ rows.
Therefore
$$
p+q \leq 2^n s(d)^2(L_n+1)
    <3s(d)^2 4^n\\
    = C(f)4^n,
$$
since $L_n+1<3\cdot2^n$.

We apply Lemma~\ref{linalg} to these vectors $v_i$ and
functionals $\varphi_j$.
By a), the new blocks are $\pi(v_i) = 0$ and
$\varphi_j\iota = 0$.
Moreover, $D'\geq D-p-q>D-C(f)4^n>0$.
\end{proof}

\section{The construction}\label{s:construction}

Let $K$ be a countable field and $f_1,f_2,\ldots$ an enumeration of the nonzero elements of~$A$.
We construct chains $S_i$ of heights $n_i$ with root dimensions $D_i$ such that
$$
D_i>4^{n_i},\quad S_i\text{ satisfies }(f_j,n_j)\text{ and }n_j\mid n_i\text{ for all }1\leq j\leq i,
$$
this is the induction invariant.

Let $S_0$ be the initial chain: $n_0 = 1$ and $D_0 = 5>4$.
Suppose that $S_{i-1}$ of height~$N$ and root dimension $D>4^N$ has been constructed.
Choose an integer $k\geq 2$ such that
\begin{equation}\label{choice}
kN>\deg f_i,\quad (D/4^N)^k>1+C(f_i);
\end{equation}
we can do this since $D/4^N>1$.
Amplify $S_{i-1}$ by the factor $k$ by Lemma~\ref{amplification}.
This gives height $kN$ and root dimension
$D^k>(1+C(f_i))4^{kN}$.
Then compress the result adding the conditions $(f_i,kN)$
by Lemma~\ref{compression}.
The new chain $S_i$ has height $n_i = kN$ and root dimension
$$
D_i > D^k-C(f_i)4^{kN}
    > 4^{kN}
    = 4^{n_i}.
$$
The old conditions $(f_j,n_j)$, $j<i$, remain valid under both operations, since every $n_j$ divides~$N$; and every $n_j$ divides the new height.
Moreover $n_i\geq 2^i$, so $n_i\to\infty$.

For every $j$ put $q_j = 2^{2n_j+1}$.
For $i\geq j$ the chain $S_i$ satisfies $(f_j,n_j)$
with $n_j\mid n_i$, so $T_{S_i}(f_j)^{q_j} = 0$ by Theorem~\ref{nilpotence}.
By
Lemma~\ref{defect}, $\rk T_{S_i}(a)\ge D_i>4^{n_i}$ and
$\rk T_{S_i}(\omega) = 2^{n_i}$.
Thus the representations $T_i = T_{S_i}$ satisfy all
the requirements~\eqref{hyp} of Proposition~\ref{fromMatrices}.

\begin{proof}[Proof of Theorem~\ref{main}]
By Proposition~\ref{fromMatrices}, the representations $T_{S_i}$ yield a nil algebra~$R$ with an element
$0\neq b = \sum_jc_jb^2c_j\in Rb^2R$.
By Proposition~\ref{ryab}, we get a~simple nil ring.
\end{proof}

\noindent Vsevolod Gubarev \\
Sobolev Institute of Mathematics \\
Acad. Koptyug ave. 4, 630090 Novosibirsk, Russia \\
Novosibirsk State University \\
Pirogova str. 1, 630090 Novosibirsk, Russia \\
e-mail: wsewolod89@gmail.com
\end{document}